\def\Id{\mathop{\rm Id}\nolimits}
\def\Hom{\mathop{\rm Hom}\nolimits}
\def\Cb{{\mathbb C}}
\def\Nb{{\mathbb N}}
\def\Zb{{\mathbb Z}}
\def\Hc{{\cal H}}
\def\Ic{{\cal I}}
\def\Kc{{\cal K}}
\def\Cc{{\cal C}}
\def\d{\delta}
\def\D{\Delta}
\def\g{\gamma}
\def\G{\Gamma}
\def\lb{\lambda}
\def\om{\omega}
\def\Om{\Omega}
\def\s{\sigma}
\def\ve{\varepsilon}
\def\vp{\varphi}
\def\nr{\natural}
\def\0b{\bf 0}
\def\ot{\otimes}
\def\ra{\rightarrow}
\def\al{>\hspace{-4pt}\vartriangleleft}
\def\hd{\overset{\ra}{\partial}}
\def \vd{\uparrow\hspace{-4pt}\partial}
\def\hs{\overset{\ra}{\sigma}}
\def \vs{\uparrow\hspace{-4pt}\sigma}
\def\hta{\overset{\ra}{\tau}}
\def \vta{\uparrow\hspace{-4pt}\tau}
\def\p{\partial}
\def\0D{\Delta^{(0)}}
\def\1D{\Delta^{(1)}}
\def\wt{\widetilde}
\def\td{\tilde}
\def\otb{{  +\hspace{-9pt}\ot }}
\newcommand{\FD}{\mathfrak{D}}
\newtheorem{theorem}{Theorem}[section]
\newtheorem{proposition}[theorem]{Proposition}
\newtheorem{lemma}[theorem]{Lemma}
\newtheorem{corollary}[theorem]{Corollary}
\newtheorem{example}[theorem]{Example}
\def\build#1_#2^#3{\mathrel{
\mathop{\kern 0pt#1}\limits_{#2}^{#3}}}
\newcommand{\ps}[1]{~\hspace{-4pt}_{^{(#1)}}}
\newcommand{\ns}[1]{~\hspace{-4pt}_{_{{<#1>}}}}
\newcommand{\sns}[1]{~\hspace{-4pt}_{_{{<\overline{#1}>}}}}
\def\odots{\ot\dots\ot}
\numberwithin{equation}{section}
\def\d{\delta}
\def\g{\gamma}
\def\lb{\lambda}
\def\om{\omega}
\def\s{\sigma}
\def\ve{\varepsilon}
\def\vp{\varphi}
\def\D{\Delta}
\def\G{\Gamma}
\def\Om{\Omega}
\def\ot{\otimes}
\def\part{\partial}
\def\ra{\rightarrow}
\def\text{\hbox}
\def\ot{\otimes}
\def\ra{\rightarrow}
\def\wt{\widetilde}
\def\Hom{\mathop{\rm Hom}\nolimits}
\def\Id{\mathop{\rm Id}\nolimits}
\def\build#1_#2^#3{\mathrel{
\mathop{\kern 0pt#1}\limits_{#2}^{#3}}}
\numberwithin{equation}{section}
\begin{document}

\title{ Cup products in Hopf cyclic cohomology via cyclic modules I}
\author{ Bahram Rangipour\\ \hline
 \footnotesize{ Department of Mathematics and
Statistics,}\\
\footnotesize{University of New Brunswick,}\\
\footnotesize{ Fredericton, NB,  CANADA, E3B 5A3} \\ \hline
\tiny{Email address:~~{bahram@unb.ca}} }
\date{}
\maketitle

\begin{abstract}
\noindent  This is the first one in  a series of  two papers on the
continuation of our study in cup products in  Hopf cyclic
cohomology. In this note we construct cyclic cocycles of algebras
out of  Hopf cyclic cocycles of algebras and coalgebras. In the next
paper we consider producing Hopf cyclic cocycle from ``equivariant"
Hopf cyclic cocycles. Our approach in both situations  is based on
(co)cyclic modules and bi(co)cyclic modules together with
Eilenberg-Zilber theorem which is different from the old definition
of cup products defined in \cite{kr1}  via traces and cotraces on DG
algebras and coalgebras.

\end{abstract}

\section{ Introduction and some preliminaries }
Hopf cyclic cohomology was  invented by Connes and Moscovici
 as part of their fundamental work of  computing the class of the index of the
hypoelliptic signature operator \cite{cm2}. The decidedly nontrivial
idea was to show that the index cocyclic is in the range of a
characteristic map.

\smallskip

  Hopf cyclic cohomology was vastly generalized to
study  Hopf-(co)module (co)algebras and coefficients(partially in
\cite{kr} and comprehensively in \cite{hkrs2, hkrs1}); later on Hopf
cyclic cohomology was generalized to encompass the category of
bialgebra-(co)module (co)algebras \cite{k}. In \cite{hkrs2} it was
conjectured that any characteristic map as above is just a component
of a cup product in Hopf cyclic cohomology.  In \cite{kr1} the
author and M. Khalkhali proved the existence of the cup products
defined via traces and cotraces over DG algebras and coalgebras. As
an intermediate step, characteristic maps via higher traces of
Crainic \cite{cr} and Gorokhovsky \cite{gor} can be thought of as
cup products with trivial coefficients. In \cite{cm7} it is
beautifully disclosed  how the idea of cup product is applicable in
case that (or even in general) the algebra  under question possesses
no invariant trace; as a replacement  one takes advantage of an
invariant cyclic cocycle to realize Hopf cyclic cocycles as cyclic
cocycles on the algebra.

\bigskip

In these  notes, we  define and study the above cup products in a
very straightforward way by using a method we learned from
\cite{mr}. The fact that the  equivariant property of cocycles
yields that the produced cocycle to be  well-defined on the
convolution and crossed product algebra is nontrivial. This prompt
us  to do  more research on this cup products \cite{Ran}. This
products also has to be analyzed from the category and
representation theory point of view. The merit of our definitions is
their simplicity and lack of dependence  on the algebra or coalgebra
structures, since we use a Hopf twisting map the whole procedure
should work to a great extent for arbitrary twisting maps. As one
knows the cyclic cohomology of Hopf algebras is defined as the
cyclic cohomology of a canonical cocyclic module associated  to the
Hopf algebra; one uses this fact to produce Hopf cyclic cocycles by
exploiting  `` equivariant" Hopf cyclic cocycles \cite{Ran}.

\bigskip

There are at least eight  kinds of cup products defined  on Hopf
cyclic cohomology but only two of them so far  are applied in NCG,
the reason could be their  lack of  classic  and/or geometric
counterparts. For the first product, one starts with Hopf cyclic
cocycles over an algebra and a coalgebra with coefficients in a SAYD
module. To define the cup product one needs the  coalgebra to act on
the algebra. The next step is to construct, via a  twisting map, the
cup product as cocycle over the convolution algebra. But one knows
that any cocycle over the convolution algebra is automatically a
cocycle over the algebra. This cup product generalizes the
characteristic map of Connes-Moscovici \cite{cm2}. Ingredients  for
the second cup product are  cyclic cocycles on a module algebra over
a Hopf algebra  and on  a comodule algebra over the same Hopf
algebra. Out of the two Hopf cyclic cocycles  one produces a cyclic
cocycle over the crossed product algebra. This cup product
generalizes the ordinary cup product in cyclic cohomology of
algebras as defined by Connes \cite{cng}.

\bigskip

For the  reader convenience,  we  briefly   recall the definition of
Hopf cyclic cohomology of coalgebras and algebras under the symmetry
of Hopf algebras and with coefficients in  stable anti
Yetter-Drinfeld (SAYD) modules \cite{cm2,hkrs1, hkrs2}. In this note
$\Hc$ is a Hopf algebra, $\mu, \eta, \D,\ve,$ and $ S$ be its
product, unite, coproduct, counit and antipode, which is also
supposed  invertible, respectively. We use the Sweedler's notation
for coproduct, i.e., $\D(h)=h\ps{1}\ot h\ps{2}$. Let $C$ be a
$\Hc$-module coalgebra, that is a coalgebra endowed with an action,
say from left,
 of $\Hc$ such that its comultiplication and counit are $\Hc$-linear, i.e,
\begin{align}
\D(hc)=h\ps{1}c\ps{1}\ot h\ps{2}c\ps{2}, \quad \ve(hc)=\ve(h)\ve(c).
\end{align}
As the coefficients in this cohomology theory the notion of  SAYD
module is defined in \cite{hkrs1} and recalled as follows. It is
said that a right module $M$ which is also  a left comodule is a
right-left SAYD module over a Hopf algebra  $\Hc$ if it satisfies
the following conditions for any $h\in \Hc$, and $m\in M$.
\begin{align}\label{SAYD1}
&m\sns{0}m\sns{-1}=m\\\label{SAYD2}
 &(mh)\sns{-1}\ot (mh)\sns{0}=
S(h\ps{3})m\sns{-1}h\ps{1}\ot m\sns{0}h\ps{2},
\end{align}
where the coaction of $\Hc$ is denoted by $\D_M(m)=m\sns{-1}\ot
m\sns{0}$.

\medskip
Having the  datum $(\Hc, C, M)$, one defines \cite{hkrs2} cocyclic
module $\{C^n_\Hc(C,M), \p_i,\s_j,\tau \}_{n\ge 0}$   as follows.

 \begin{equation*}
 \Cc^n:= C^n_\Hc(C,M)=M\ot_\Hc C^{\ot n+1}, \quad n\ge 0,
 \end{equation*}
 with the following cocyclic structure,
\begin{align}
&\p_i:\Cc^n\ra \Cc^{n+1}, & 0\le i\le n+1\\
& \s_j: \Cc^n\ra \Cc^{n-1}, & 0\le j\le n-1,\\
& \tau:\Cc^n\ra \Cc^n,
\end{align}

defined explicitly as follows, where we abbreviate $\td c=c^0\odots
c^n$,
 \begin{align}\label{CCM1}
& \p_i(m\ot_\Hc \td c )=m\ot_\Hc c^0\odots \Delta(c_i)\odots c^n,
\\\label{CCM2}
 &\p_{n+1}(m\ot_\Hc \td c)=m\sns{0}\ot_\Hc c^0\ps{2}\ot c^1\odots c^n\ot
  m\sns{-1}c^0\ps{1},\\\label{CCM3}
 &\sigma_i(m\ot_\Hc \td c)=m\ot_\Hc c^0\odots \epsilon(c^{i+1})\odots c^n,
 \\\label{CCM4}
 &\tau(m\ot_\Hc \td c)=m\sns{0}\ot_\Hc c^1\odots c^n\ot m\sns{-1}c^0.
 \end{align}
It is checked   \cite{hkrs2} that $ \p_i,\s_j,$ and $\tau$ satisfy
the following identities, which are  recalled  from \cite{cng} as
the definition of cocyclic module.
\begin{equation}\label{ds}
\p_j  \p_i = \p_i  \p_{j-1}, \, \, i < j  , \qquad \s_j \s_i = \s_i
\s_{j+1},  \, \,  i \leq j
\end{equation}
\begin{equation} \label{sd}
\s_j  \p_i = \left\{ \begin{matrix} \p_i  \s_{j-1} \hfill &i < j
\hfill \cr 1_n \hfill &\hbox{if} \ i=j \ \hbox{or} \ i = j+1 \cr
\p_{i-1}  \s_j \hfill &i > j+1  ;  \hfill \cr
\end{matrix} \right.
\end{equation}
\begin{eqnarray} \label{ci}
\tau_n  \p_i  = \p_{i-1}  \tau_{n-1} ,
 \quad && 1 \leq i \leq n ,  \quad \tau_n  \p_0 =
\p_n \\ \label{cj} \tau_n  \s_i = \s_{i-1} \tau_{n+1} , \quad &&
1 \leq i \leq n , \quad \tau_n  \s_0 = \s_n  \tau_{n+1}^2 \\
\label{ce} \tau_n^{n+1} &=& 1_n  \, .
\end{eqnarray}

As the motivating example  of the above theory  one recovers the
cyclic complex of a Hopf algebra $\Hc$ endowed with a modular pair
in involution (MPI) $(\d,\s)$, which we recall it  here from
\cite{cm2}. The character $\d$ is an algebra map $\Hc\ra \Cb$, and
the group-like element $\s\in \Hc$ is a coalgebra map $\Cb\ra \Hc$,
i.e. $\s:=\s(1)$ satisfies $\D(\s)=\s\ot \s$. The pair $(\d, \s)$ is
called MPI if $\d(\s)=1$, and  $\td{ S}_\d=Ad \s$, where the twisted
antipode $\td S_\d$ is defined by
\begin{equation}
\td{ S_\d}(h)=(\d\ast S)(h)=\d(h\ps{1})S(h\ps{2}).
\end{equation}

One  knows that  $\Hc$ is left $\Hc$-module coalgebra via left
multiplication. On the other hand if one lets $M=^\s\Cb_\d$ to be
the ground field $\Cb$ endowed with the left $\Hc$ coaction  via
$\s$ and right $\Hc$ action  via the character $\d$, then  its
checked \cite{hkrs2} that $(\d,\s)$ is a MPI if and only if
$^\s\Cb_\d$ is a SAYD. Thanks to the multiplication and the antipode
of $\Hc$ one identifies $C_\Hc(\Hc, M)$ with $M\ot \Hc^{\ot n}$ via
the following map,
\begin{align}
&\Ic: M\ot_\Hc \Hc^{\ot (n+1)}\ra M\ot \Hc^{\ot n},\\
&\Ic(m\ot_\Hc h^0\odots h^n)=mh^0\ps{1}\ot S(h\ps{2})\cdot(h^1\odots
h^n).
\end{align}
As a result one simplifies $\p_i$, $\s_j$, and $\tau$, in this case,
and recovers the original definition of the cyclic cohomology of
Hopf algebras \cite{cm2}.

\begin{eqnarray} \label{Ans}
\p_0 (h^1 \ot \ldots \ot h^{n-1}) &=& 1 \ot h^1 \ot \ldots \ot
h^{n-1} , \\  \nonumber \p_j (h^1 \ot \ldots \ot h^{n-1}) &=& h^1
\ot \ldots \ot \D h^j \ot \ldots \ot h^{n-1} , \qquad 1 \leq j \leq
n-1
\\  \nonumber
\p_n (h^1 \ot \ldots \ot h^{n-1}) &=& h^1 \ot \ldots \ot h^{n-1} \ot
\s, \\ \nonumber
 \s_i (h^1 \ot \ldots \ot h^{n+1}) &=& h^1 \ot \ldots \ot \ve
(h^{i+1}) \ot \ldots \ot h^{n+1} , \qquad 0 \leq i \leq n \, , \\
\nonumber
   \tau_n (h^1 \ot \ldots \ot h^n) &=& (\D^{p-1}  \wt S (h^1)) \cdot h^2
\ot \ldots \ot h^n \ot \s .
\end{eqnarray}

Similarly  an algebra  which is $\Hc$-module and its algebra
structure is $\Hc$-linear is called $\Hc$-module algebra. Let $A$ be
a  $\Hc$-module algebra and then one endows $M\ot A^{\ot n+1}$ with
the diagonal action of $\Hc$ and forms $C^n_\Hc(A,M)=\Hom_\Hc(M\ot
A^{\ot n+1}, \Cb)$ as the space of  $\Hc$-linear maps. It is checked
that the following defines a cocyclic module structure on
$C^n(A,M)$.
\begin{align*}
&(\p_i \vp)(m\ot \td{a}) = \vp(m\ot a^0\ot \dots
\ot a^i a^{i+1}\ot\dots \ot a^{n+1}), &0 \leq i <n,\\
&  (\p_{n+1}\vp)(m\ot \td{a}) =\vp(m\sns{0}\ot
(S^{-1}(m\sns{-1})a^{n+1})a^0\ot a^1\ot\dots \ot a^{n}),\\
&(\s_i\vp)(m\ot \td a) = \vp(m\ot a^0 \ot \dots
\ot a^i\ot 1 \ot \dots \ot a^{n-1}), &0\le i\le n-1,\\
&(\tau\vp)(m\ot \td a) = \vp(m\sns{0}\ot (S^{-1}(m\sns{-1})a^n)\ot
a^0\ot \dots \ot a^{n-1}),
\end{align*}

The cyclic cohomology of this cocyclic module is denoted by
$HC^\ast_\Hc(A,M)$.

\medskip

An algebra is called  $\Hc$-comodule coalgebra if  it is a $\Hc$
comodule and its coalgebra structure are $\Hc$ colinear. Similar to
the other case, one  defines  $^\Hc C^n(A,M)$ to be the space of all
colinear maps from $A^{\ot n+1}$ to $M$. One checks that the
following defines a cocyclic module structure on $^\Hc C^n(A,M)$.

\begin{align*}
&(\p_i\vp)(\td a)=\vp( a^0\ot \dots
 \ot a^i a^{i+1}\ot\dots \ot a^{n+1}), &\quad 0\le i< n,\\
 &(\p_{n+1}\vp)(\td a)= \vp(a^{n+1}\ns{0}a^0\ot a^1\dots \ot
a^{n-1}\ot a^{n})a^{n+1}\ns{-1},\\
& (\s_i \vp)( \td a)= \vp(a^0 \ot \dots \ot a^i\ot
 1 \ot \dots \ot a^{n-1}), & \quad 0\le i\le n-1,\\
&(\tau\vp )(a^0\ot\dots\ot a^n)=\vp( a^n\ns{0}\ot
  a^0\ot\dots \ot a^{n-1}\ot a^{n-1})a^{n}\ns{-1}.
\end{align*}
The cyclic cohomology of this cocyclic module is denoted  by $^\Hc
HC^\ast(A,M)$.
 For completeness, we record below the
bi-complex
$$\, (C C^{*, *} (C, \Hc,  M), \, b , \, B )$$
that computes the Hopf cyclic cohomology of a coalgebra $C$ with
coefficients in a SAYD module $M$ under the symmetry of a Hopf
algebra $\Hc$:
\begin{equation*} \label{CbB}
CC^{p, q} (C, \Hc; M)  \, = \, \left\{
\begin{matrix}  C^{q-p}_\Hc (C, M ) \, , \quad q \geq p \, , \cr
  0 \, ,  \quad \qquad  \qquad q <  p \, , \end{matrix}   \right.
\end{equation*}

    the operator
\begin{equation*}  \nonumber
b: C^{n}_\Hc (C,M) \ra C^{n+1} _\Hc(C,M), \qquad b =
\sum_{i=0}^{n+1} (-1)^i \p_i \,
\end{equation*}

and  the $B$-operator $B:C^{n}_\Hc (C,M) \ra C^{n-1}_\Hc (C,M)$ is
defined by the formula
\begin{equation} \nonumber
  B = A \circ B_{0} \, ,  \quad n \geq 0 \, ,
\end{equation}
where
\begin{equation*}
B_{0}= \s_{n-1}\tau(1-(-1)^n\tau)
\end{equation*}
 and
\begin{equation} \nonumber
A = 1 + \lb  + \cdots +  \lb^n \,  , \qquad \text{with} \qquad \lb =
(-1)^{n-1} \tau_n \, .
\end{equation}

The groups $\, \{H C^n  (\Hc; \delta,\sigma)\}_{n \in \Nb} \,$ are
computed from the first quadrant total complex $\, ( TC^{*} (\Hc;
\delta,\sigma),\, b+B ) \,$,
\begin{equation*}
    TC^{n}(\Hc; \delta,\sigma) \,  = \, \sum_{k=0}^{n} \, C C^{k, n-k}
     (\Hc; \delta,\sigma) \, ,
\end{equation*}
and the periodic groups $\, \{H P^i  (\Hc; \delta,\sigma)\}_{i \in
\Zb/2} \,$ are computed from the full total complex $\,(T P^{*}(\Hc;
\delta,\sigma),\,  b+B ) \,$,
\begin{equation*}
      T P^{i}(\Hc; \delta,\sigma) \,  = \, \sum_{k \in \Zb} \,
       C C^{k, i-k} (\Hc; \delta,\sigma) \, .
\end{equation*}

Let $(C^n, \delta_i, \sigma_i, \tau_n)$ and $(C'^n, \delta_i,
\sigma_i, \tau_n)$ be two cocyclic objects in the category of vector
spaces. Their product is the cocyclic object $((C\times C')^n,
\delta_i, \sigma_i, \tau_n)$ with $(C\times C')^n = C^n \otimes
C'^n$ and $\delta_i =\delta_i \otimes \delta_i)$, $\sigma_i
=\sigma_i \otimes \sigma_i$ and $\tau_n =\tau_n \otimes \tau_n$.

Their tensor product is the bicocyclic module $C \otimes C'$ defined
by $ (C \otimes C')^{m, n}=C^m \otimes C^n$ with horizontal and
vertical structure borrowed from $C$ and $C'$ respectively.
Eilenberg-Zilber states that the Cyclic cyclic cohomology of mixed
complexes  $C\times C'$ and  $Tot(C\ot C')$ are the same via the the
shuffle map \cite{lo}.
\section{  Module algebras paired with   module coalgebras }
Let $\Hc$ be a Hopf algebra,  $A$ be a $\Hc$-module algebra and
 $(\d,\s)$ be
 a modular pair in involution on $\Hc$.
Connes and Moscovici  \cite{cm2} showed  that the following defines
 a map of cocyclic modules
\begin{align}
&\chi: \Hc^\nr_{(\d,\s)}\ra C^\ast(A),\\\notag &\chi(h^1\odots
h^n)(a^0\odots a^n)=\tau(a^0h^1(a^1)\dots h^n(a^n)).
\end{align}
Here   $\tau: A\ra \Cb$ is a $\d$-invariant  \;\;$\s$-trace, i.e.
for all $a, b$ and $h$
\begin{align}
\tau(ha)=\d(h)\tau(a),\\
\tau(ab)=\tau(b\s a).
\end{align}
The above map then induces the following characteristic map on the
level
 of cohomologies:
\begin{equation}
\chi:HC^n_{(\d,\s)}(\Hc)\ra HC^n(A).
\end{equation}
   Hopf cyclic cohomology and  SAYD (stable anti-Yetter-Drinfeld)  modules,
   generalize  cyclic cohomology of Hopf algebras and MPI
   (modular pair in involution),  respectively.
   Now  a $\d$-invariant
$\s$-trace  is exactly a closed cyclic cocycle in $C^0_\Hc(A,
^\s\Cb_\d)$. These facts prompted  us  in \cite{hkrs2} to conjecture
that there should exist a generalization of characteristic map  as a
pairing between Hopf cyclic cohomology of module algebras and module
coalgebras:
\begin{equation}\label{hkrs}
HC^n_\Hc(A,M)\ot HC^m_\Hc(C,M)\ra HC^{n+m}(A,M),
\end{equation}
where  $M$ is a left-right SAYD module over $H$ and $C$ is  a $\Hc$
module coalgebra acting on $A$ in the sense that there is a map
\begin{equation}\label{0}
C\ot A\ra A,
\end{equation}
such that for any $h\in\Hc$, any $c\in C$ and any $a,b\in A$ one has
\begin{align}\label{1}
 &(hc)a= h(ca)\\\label{2}
 &c(ab)=(c\ps{1}a)(c\ps{2}b)\\\label{3}
&c(1)=\epsilon(c)1
\end{align}

Although there is a proof of the above conjecture in \cite{kr1}, we
would like to give a more direct proof  based on theory of cyclic
modules instead of  traces on DG algebras.
 The advantage of this
 new view is not only its simplicity but also its efficiency
  which enables one
  to use the precise expression of these cup products as it is shown  in \cite{Ran}.

\bigskip
 One constructs a very useful convolution algebra $B=\Hom_\Hc(C,A)$,
which is the algebra of all $\Hc$-linear  maps from $A$ to $C$.
 The unit of this algebra is given by $\eta\circ\epsilon$,
  where $\eta:\Cb\ra A$ is the unit of $A$.
  The multiplication of $f,q\in B$ is given by
\begin{equation}
(f\ast g)(c)=f(c\ps{1})g(c\ps{2})
\end{equation}
Now consider  two cocyclic modules
$$(C^\ast_\Hc(A,M), \d_i,\s_j,t), \quad \text{and}\quad
(C^\ast_\Hc(C,M), d_i,s_j,\tau)$$ and let us  make a new bicocyclic
module by just tensoring these two ones.  The bicocyclic module has
in bidegree $(p,q)$
$$C^{p,q}:=\Hom_\Hc(M\ot A^{\ot p+1},\Cb)\otb (M\ot_\Hc C^{\ot q+1}),$$ with
horizontal structure $\hd_i=d_i\ot\Id$, $\hs_j=s_j\ot\Id$, and
$\hta=t\ot \Id$ and vertical structure $\vd_i=\d_i\ot\Id$,
$\vs_j=\s_j\ot\Id$, and $\vta=\tau\ot \Id$. Obviously
$(C^{n,m},\hd,\hs,\hta,\vd,\vs,\vta)$  defines a bicocyclic module,
where $\otb:=\ot$ for which  we use
 it to distinguish between  $\Hom_\Hc(M\ot A^{\ot p+1},\Cb)$ and  $(M\ot_\Hc
C^{\ot q+1})$.

Now let us define  the  map
\begin{align}\label{acpsi}
&\Psi_c:C^{n,n}\ra \Hom(B^{\ot n+1},\Cb),\\\notag & \Psi_c(\phi\otb
m\ot c^0\odots c^n)(f^0\odots f^n)=\phi(m\ot f^0(c^0)\odots
f^n(c^n)),
\end{align}
which is obviously well defined due to  the facts that $f$ is
$\Hc$-linear, $\phi$ is  equivariant  and \eqref{1} holds.

  \begin{proposition}
The map $\Psi_c$ defines a  cyclic map between  the diagonal of
$C^{\ast,\ast}$ and the cocyclic module $C^\ast(B)$.
\end{proposition}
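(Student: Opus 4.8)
The plan is to verify that $\Psi_c$ intertwines the cocyclic structure maps on the two sides, i.e. that it carries the cofaces, codegeneracies and cyclic operator of the diagonal cocyclic module $\{C^{n,n}\}$ to those of the Connes cocyclic module $C^\ast(B)$. Write $(\d_i,\s_j,t)$ for the operators of $C^\ast_\Hc(A,M)$ and $(d_i,s_j,\tau)$ for those of $C^\ast_\Hc(C,M)$, so that the diagonal carries $\d_i\ot d_i$, $\s_j\ot s_j$ and $t\ot\tau$; denote by $\partial_i^B,\s_j^B,\tau^B$ the usual operators of $C^\ast(B)$ (convolution of adjacent entries, insertion of the unit $\eta\circ\epsilon$, and cyclic rotation). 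Well-definedness of $\Psi_c$ over $\ot_\Hc$ and $\Hom_\Hc$ has already been noted, so it remains to check the intertwining relations. The guiding principle throughout is that the convolution product on $B$ is built so that ``convolve'' equals ``comultiply on $C$, then multiply in $A$'', which is precisely what the diagonal cofaces encode; this is what makes the routine relations fall out.

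Since both the diagonal and $C^\ast(B)$ are genuine cocyclic modules, the identity $\partial_{n+1}=\tau\,\partial_0$ holds in each; hence once $\Psi_c$ is shown to commute with $\partial_0$ and with the cyclic operator, its commutation with the twisted last coface $\partial_{n+1}$ is automatic. I would therefore reduce the verification to the inner cofaces $\partial_0,\dots,\partial_n$, all codegeneracies, and the cyclic operator. For an inner coface the computation is short: applying $\d_i\ot d_i$ replaces $c^i$ by $c^i\ps{1}\ot c^i\ps{2}$ and multiplies the two corresponding $A$-values, while on the $C^\ast(B)$ side $\partial_i^B$ convolves $f^i\ast f^{i+1}$, and $(f^i\ast f^{i+1})(c^i)=f^i(c^i\ps{1})f^{i+1}(c^i\ps{2})$ by definition; the two results coincide. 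For a codegeneracy one uses $1_B=\eta\circ\epsilon$, so $1_B(c)=\epsilon(c)1_A$: inserting $1_B$ via $\s_j^B$ produces at once the counit $\epsilon(c^{j+1})$ (the coalgebra degeneracy $s_j$) and a unit $1_A$ in the $A$-slot (the algebra degeneracy $\s_j$), matching $\s_j\ot s_j$.

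The real work is the cyclic operator. Unwinding $\Psi_c\circ(t\ot\tau)$, the coalgebra cyclic $\tau$ first sends $m\ot c^0\odots c^n$ to $m\sns{0}\ot c^1\odots c^n\ot m\sns{-1}c^0$, so that the last $B$-argument is evaluated as $f^n(m\sns{-1}c^0)$, and then the algebra cyclic $t$ coacts a second time on $m\sns{0}$ and introduces the twist $S^{-1}\big((m\sns{0})\sns{-1}\big)$. Two independent coactions of $M$ and the antipode thus appear simultaneously, and the task is to show they collapse to the bare rotation $\tau^B$. I would handle this by: (i) using $\Hc$-linearity of $f^n$ to extract the coaction element, $f^n(m\sns{-1}c^0)=m\sns{-1}f^n(c^0)$; (ii) invoking coassociativity of the $M$-coaction to merge the two coactions into one coproduct, so that the element hitting $c^0$ and the element inside $S^{-1}$ become $m\sns{-1}\ps{1}$ and $m\sns{-1}\ps{2}$ with common module leg $m\sns{0}$; (iii) applying the antipode identity $S^{-1}(h\ps{2})h\ps{1}=\epsilon(h)1$ to contract $S^{-1}(m\sns{-1}\ps{2})\,m\sns{-1}\ps{1}=\epsilon(m\sns{-1})1$; and (iv) using the comodule counit $\epsilon(m\sns{-1})m\sns{0}=m$ to absorb the scalar. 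What remains is exactly $\phi(m\ot f^n(c^0)\ot f^0(c^1)\odots f^{n-1}(c^n))$, which is $(\tau^B\Psi_c)(\phi\otb m\ot c^0\odots c^n)$ evaluated on $f^0\odots f^n$.

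The step I expect to be the main obstacle is precisely (ii)--(iii): organizing the two separate coactions of the coefficient module---one supplied by each factor's cyclic map---together with the $S^{-1}$-twist so that coassociativity and the antipode axiom apply cleanly. This is where the Hopf-algebraic structure is genuinely used, and where a naive ``tensor of two cyclic maps'' would not by itself land in the untwisted cyclic module of $B$; the convolution structure on $B$ and the invertibility of $S$ are what make the twists cancel. Once these identities are in place, all simplicial and cyclic relations are verified and $\Psi_c$ is a map of cocyclic modules from the diagonal of $C^{\ast,\ast}$ to $C^\ast(B)$.
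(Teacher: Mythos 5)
Your proposal is correct and follows essentially the same route as the paper: a direct check that $\Psi_c$ intertwines the structure maps, with the coface case resting on $(f^i\ast f^{i+1})(c)=f^i(c\ps{1})f^{i+1}(c\ps{2})$ and the cyclic case resting on exactly the paper's chain of identities ($\Hc$-linearity of $f^n$, coassociativity of the $M$-coaction, $S^{-1}(h\ps{2})h\ps{1}=\ve(h)1$, and the counit axiom). Your only addition --- deducing compatibility with the last coface from $\p_{n+1}=\tau\,\p_0$ in both cocyclic modules --- is a tidy way of dispatching one of the verifications the paper leaves to the reader, but it does not change the substance of the argument.
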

\begin{proof}
We have to show that $\Psi$ commutes  with the cyclic structures of
the  two sides. Indeed we just check it for the first  face and
cyclic operator and leave the rest to the reader. We have
\begin{align*}
&\Psi_c(\hd_0\vd_0(\phi\otb m\ot c^0\odots c^n))(f^0\odots
f^{n+1})=\\
&\Psi_c(d_0\phi\otb
\d_0(m\ot c^0\odots c^n))(f^0\odots f^{n+1})=\\
&\phi(m\ot f^0(c^0\ps{1})f^1(c^0\ps{2})\ot  f^2(c^1)\odots
f^{n+1}(c^n)=\\
&\phi(m\ot (f^0\ast f^1)(c^0)\ot  f^2(c^1)\odots
f^{n+1}(c^n))=\\
&(d_0\Psi_c(\phi\otb m\ot \td c))(f^0\odots f^{n+1}). \\
\end{align*}
\begin{align*}
&\Psi_c(\hta\vta(\phi\otb m\ot c^0\odots c^n))(a^0\odots
a^{n})= \\
&\Psi_c(t\phi\otb
\tau(m\ot c^0\odots c^n))(f^0\odots f^{n})=\\
&t\phi(m\sns{0} f^0(c^1)\odots f^{n-1}(c^n))\ot
m\sns{-1}f^n(c^0))=\\
&\phi(m\sns{0}\ot S^{-1}(m\sns{-1})m\sns{-2}f^n(c^0)\ot
f^0(c^1)\odots f^{n-1}(c^n))=\\
&\phi(m\ot f^n(c^0)\ot
f^0(c^1)\odots  f^{n-1}(c^n))=\\
&(t\Psi_c(\phi\otb m\ot c^0\odots c^n))(f^0\odots f^{n}).
\end{align*}
\end{proof}
One can take advantage of   properties  \eqref{1},\eqref{2},
 and \eqref{3} to prove  that
there exists  a natural unital algebra map $\nr:  A\ra
Hom_\Hc(A,C)$,
  explicitly defined by
$\nr(a)(c)=c(a)$.  As a result, one obtains a cyclic map
 $\nr: C^\ast( B,\Cb)\ra C^\ast(A,\Cb)$.  One then composes
$\nr$ with $\Psi_c$ to get a cyclic map
$$\Psi=\nr\circ\Psi_c: C^\ast(\FD(C^{\ast,\ast}))\ra C^\ast(A, \Cb).$$

\medskip

  Let $\Kc$ be a sub Hopf algebra of $\Hc$.  Although $A$ is a
   $\Hc$-module algebra, the coalgebra
  $\Cc=C(\Hc,\Kc)=\Hc\ot_\Kc \Cb$
     does not inherit this property from $\Hc$ since
    the action of  $\Cc$ on $A$ is not well-defined.
    One cures this problem by letting $\Cc$ acts on the invariant
    sub algebra of $A$ under the action of $\Kc$. Let
  \begin{equation}
  A^\Kc=\{ a\in A\mid ka=\ve(k)a\}.
  \end{equation}
  One checks that the action of  $\Cc$ on $A^\Kc$ is
   well defined and satisfies \eqref{1}, \eqref{2}, and \eqref{3}.
  One notes that it is not possible to write the map \eqref{acpsi} for
   the case $A^\Kc$ and $\Cc$ because
  $A^\Kc$ is not a $\Hc$-module algebra. Instead one writes the invariant
  form of \eqref{acpsi} as follows.
  Let us introduce
  $$C^{p,q}_r=C^p_\Hc(A,M)\otb C^q_\Hc(\Hc,\Kc;M)$$ with its standard
  cyclic structure.
  Then one has a cyclic map
\begin{align}
&\Psi_r: \FD(C_r^{\ast,\ast})\ra C^\ast(A_\Kc),\\\notag
&\Psi_r(\phi\otb m\ot c^0\ot c^1\odots c^n)(a^0\odots a^n)=\\\notag
&\phi(m\ot c^0(a^0)\odots c^n(a^n)).
\end{align}
It is  shown  that the above map defines a cyclic map. Note that it
does not  land in $C^\ast(A)$.
\begin{corollary}
The map $\Psi$  induces the following maps   on cyclic cohomologies:
\begin{align}
&\Psi: HC^n( \FD(C^{\ast,\ast}))\ra HC^n(A)\\[.3cm]
&\Psi_c: HC^n( \FD(C_c^{\ast,\ast}))\ra HC^n(\Hom_\Hc(C,A))\\[.3cm]
&\Psi_r: HC^n( \FD(C_r^{\ast,\ast}))\ra HC^n(A_\Kc).
\end{align}
\end{corollary}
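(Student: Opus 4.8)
The plan is to deduce the Corollary from the functoriality of cyclic cohomology, combined with the fact---already established above---that $\Psi_c$, $\nr$, and $\Psi_r$ are maps of cocyclic modules. All of the genuine content has therefore been placed in the verification that these three maps are cyclic maps; once that is granted, passing to cohomology is purely formal and the three displayed arrows are produced by a single mechanism.

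First I would spell out the functoriality principle. Suppose $F\colon \Cc^\ast\ra \Cc'^\ast$ is a morphism of cocyclic modules, that is, a sequence of linear maps commuting with every coface $\p_i$, every codegeneracy $\s_j$, and the cyclic operator $\tau$. Then $F$ commutes with $b=\sum_i(-1)^i\p_i$, and, since both $A=1+\lb+\cdots+\lb^n$ and $B_0=\s_{n-1}\tau(1-(-1)^n\tau)$ are assembled solely from $\tau$ and $\s_{n-1}$, it also commutes with $B=A\circ B_0$. Hence $F$ is a morphism of the associated $(b,B)$-bicomplexes and descends to a well-defined map $F_\ast\colon HC^\ast(\Cc)\ra HC^\ast(\Cc')$ on the cohomology of the total complexes \cite{lo}. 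This is exactly the passage that turns each cyclic map below into one of the arrows in the statement.

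Next I would feed in the three cyclic maps. The Proposition shows that $\Psi_c\colon \FD(C^{\ast,\ast})\ra C^\ast(B)$ is a cyclic map (the first coface and the cyclic identity are checked there, and the codegeneracies and remaining cofaces are entirely analogous); by the principle above it induces the middle arrow $HC^n(\FD(C_c^{\ast,\ast}))\ra HC^n(\Hom_\Hc(C,A))$. The map $\nr$ comes from the unital algebra homomorphism $A\ra B=\Hom_\Hc(C,A)$, $a\mapsto(c\mapsto c(a))$, whose well-definedness rests precisely on \eqref{1}, \eqref{2}, and \eqref{3}; pullback along a unital algebra map is a morphism of the cyclic modules $C^\ast(B)\ra C^\ast(A)$, hence a cyclic map. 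Composing, $\Psi=\nr\circ\Psi_c$ is a cyclic map, and functoriality yields the first arrow $HC^n(\FD(C^{\ast,\ast}))\ra HC^n(A)$. Finally, $\Psi_r$ is a cyclic map by the computation indicated after its definition---here the $\Kc$-invariance is what forces the image to lie in $C^\ast(A^\Kc)$ rather than in $C^\ast(A)$---so it induces the third arrow.

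The step I expect to demand the most care is \emph{not} the passage to cohomology, which is formal, but the residual checks that each map is cyclic in the first place: completing the codegeneracy and leftover coface identities deferred to the reader in the Proposition, and establishing the analogous identities for $\Psi_r$, where one must track both the $\Kc$-invariance and the tensor-over-$\Hc$ relations so that every defining formula is well defined. Once these are in hand, all three assertions follow at once from the single statement that a morphism of cocyclic modules descends to cyclic cohomology.
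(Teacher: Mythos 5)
Your proposal is correct and follows exactly the route the paper intends: the Proposition (together with the algebra map $\nr$ and the asserted cyclicity of $\Psi_r$) supplies the cyclic maps, and the passage to cohomology is the standard functoriality of cyclic cohomology under morphisms of cocyclic modules, which commute with $b$ and with $B=A\circ B_0$ since these are built from the cofaces, $\s_{n-1}$, and $\tau$. The paper leaves this formal step implicit; your write-up simply makes it explicit, so there is nothing to correct.
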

Now composing $\Psi$, $\Psi_c$, and $\Psi_r$ with  the corresponding
Alexander-Whitney map $AW$  one obtains  the following cup products:
\begin{align*}
&\cup=\Psi\circ AW :HC^p_\Hc(A,M)\ot HC^q_\Hc(C,M)\ra HC^{p+q}(A),\\[.3cm]
&\  \cup=\Psi_c\circ AW:HC^p_\Hc(A,M)\ot HC^q_\Hc(C,M)\ra
HC^{p+q}(\Hom_\Hc(C,A)),\\[.3cm]
&\cup=\Psi_r\circ AW:HC^p_\Hc(A, M)\ot HC^q_\Hc(\Hc,\Kc; M)\ra
HC^{p+q}(A^\Kc).
\end{align*}

\begin{proposition}\label{ccup}
The above cup product is precisely  given by the following formula
in the level of Hochschild cohomology.
\begin{align*}
&\cup:C^p_\Hc(A,M)\ot C^q_\Hc(C,M)\ra C^{p+q}(A),\\\notag &(\phi\cup
(m\ot c^0\odots c^q))(a^0\odots a^{p+q})=\\\notag &\phi(m\ot
c^0\ps{p+1}(a^0)c^1(a^1)\dots c^q(a^q)\ot c^0\ps{1} (a^{q+1})\odots
c^0\ps{p}(a^{p+q}))
\end{align*}
\end{proposition}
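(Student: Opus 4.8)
The plan is to compute the composite $\cup=\Psi\circ AW$ directly on Hochschild cochains and then collapse the output using the equivariance of $\phi$ and the SAYD axioms.

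First I would record what $\Psi=\nr\circ\Psi_c$ does on the diagonal. Since $\nr(a)(c)=c(a)$ and $\Psi_c$ is given by \eqref{acpsi}, for a diagonal element one reads off at once
\[
\Psi(\phi\otb m\ot c^0\odots c^n)(a^0\odots a^n)=\phi(m\ot c^0(a^0)\odots c^n(a^n)),
\]
so $\Psi$ merely pairs the $i$-th coalgebra slot with $a^i$ through the action $C\ot A\to A$; this is the only feature of $\Psi$ that enters, and \eqref{1} is exactly what keeps it well defined.

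Next I would make the Alexander--Whitney map explicit in bidegree $(p,q)$ as a composite of cofaces raising each degree to $p+q$. On the algebra factor it is $\delta_0$ applied $q$ times, whose dual effect is to multiply the first $q+1$ arguments of $\phi$ into a single slot; on the coalgebra factor it is the cyclic (top) coface \eqref{CCM2} applied $p$ times. Each application of \eqref{CCM2} comultiplies the zeroth entry and pushes one of its legs to the tail while dragging one leg of the coaction of $m$, so that $p$ iterations send $m\ot c^0\odots c^q$ to
\[
m\sns{0}\ot c^0\ps{p+1}\ot c^1\odots c^q\ot m\sns{-p}c^0\ps{1}\ot\cdots\ot m\sns{-1}c^0\ps{p}.
\]
Feeding this into the diagonal formula for $\Psi$ and using \eqref{1} to move each $m\sns{-k}$ off its coalgebra slot and onto the corresponding $A$-argument produces precisely the arrangement of $c^0\ps{1},\dots,c^0\ps{p+1}$ against $a^0,\dots,a^{p+q}$ displayed in the statement, but still carrying the coaction decorations $m\sns{-1},\dots,m\sns{-p}$.

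The heart of the proof is to remove those decorations. Here I would invoke the $\Hc$-linearity of $\phi$ in its balanced form $\phi(mh\ot b^0\odots b^n)=\phi(m\ot h\ps{1}b^0\odots h\ps{n+1}b^n)$ --- the very identity that lets $\Psi$ descend through $M\ot_\Hc C^{\ot n+1}$ --- together with the stability \eqref{SAYD1}, the anti-Yetter--Drinfeld compatibility \eqref{SAYD2}, and the antipode axioms, in order to telescope the spread-out legs $m\sns{-p},\dots,m\sns{-1}$ back against $m\sns{0}$ and recover the bare $m$. Reassembling these individually-placed coaction legs into a single balanced action and then killing it by stability is the only genuinely delicate point; everything else is bookkeeping of Sweedler indices. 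As a sanity test I would check the two extreme cases: for $p=0$ there are no coalgebra cofaces and the formula collapses to $\phi(m\ot c^0(a^0)c^1(a^1)\cdots c^q(a^q))$, which is the Connes--Moscovici characteristic map, while for $q=0$ the algebra cofaces are absent and one is left with the purely coproduct-driven expression $\phi(m\ot c^0\ps{p+1}(a^0)\ot c^0\ps{1}(a^1)\odots c^0\ps{p}(a^p))$.
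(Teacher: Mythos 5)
Your reduction of the composite $\Psi\circ AW$ is carried out correctly, and it is the same route the paper takes (the paper's entire proof is the one sentence that composing the $AW$ map with $\Psi$ yields the formula): $\Psi$ on the diagonal pairs the $i$-th coalgebra slot with $a^i$, the algebra factor receives $(\d_0)^q$, which merges the first $q+1$ arguments of $\phi$ into one slot, and $p$ iterations of the last (cyclic) coface \eqref{CCM2} on the coalgebra factor produce
\begin{equation*}
m\sns{0}\ot_\Hc c^0\ps{p+1}\ot c^1\odots c^q\ot m\sns{-p}c^0\ps{1}\ot\cdots\ot m\sns{-1}c^0\ps{p},
\end{equation*}
so that the composite evaluates, after using \eqref{1}, to
$\phi\bigl(m\sns{0}\ot u^0\ot m\sns{-p}u^1\ot\cdots\ot m\sns{-1}u^p\bigr)$
with $u^0=c^0\ps{p+1}(a^0)c^1(a^1)\cdots c^q(a^q)$ and $u^j=c^0\ps{j}(a^{q+j})$ for $1\le j\le p$. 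Up to this point your bookkeeping is right.

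The gap is the final step, which you flag as ``the only genuinely delicate point'' but never carry out, and which the mechanism you describe cannot deliver. Balanced equivariance inserts or removes legs only \emph{diagonally}, on all $p+1$ slots at once; combined with stability \eqref{SAYD1} it gives
\begin{equation*}
\phi(m\ot u^0\ot u^1\ot\cdots\ot u^p)=\phi\bigl(m\sns{0}\ot m\sns{-p-1}u^0\ot m\sns{-p}u^1\ot\cdots\ot m\sns{-1}u^p\bigr),
\end{equation*}
which differs from the $AW$ output precisely by the leg $m\sns{-p-1}$ sitting on the zeroth slot. Telescoping the legs $m\sns{-p},\dots,m\sns{-1}$ back against $m\sns{0}$ while leaving slot $0$ bare is not a consequence of \eqref{SAYD1}, \eqref{SAYD2}, equivariance and the antipode axioms: already for $M={}^\s\Cb_\d$ the composite equals $\phi(1\ot u^0\ot\s u^1\ot\cdots\ot\s u^p)$, which $\d$-invariance converts to $\phi(1\ot\s^{-1}u^0\ot u^1\ot\cdots\ot u^p)$ but never to the claimed $\phi(1\ot u^0\ot\cdots\ot u^p)$. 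Concretely, take $\Hc=\Cb[t,t^{-1}]$, $\s=t$, $\d=\ve$, $p=1$, $q=0$, $c^0=t^k$, and $A$ an algebra with automorphism $T$: the composite is the cochain $\phi(1\ot a^0\ot T(a^1))$ while the stated formula is $\phi(1\ot a^0\ot a^1)$, and these are different cochains on $A$. So the identity you assert is exactly the mathematical content of the proposition and remains unproven; closing it requires genuinely new input (for instance restricting to cocycles and arguing modulo coboundaries, or keeping the coaction decorations in the formula), not the listed axioms. Note also that your $q=0$ ``sanity check'' is vacuous on this point, since $q=0$ is precisely the case in which all $p$ decorations are present.
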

\begin{proof}
By composing the AW map given in \cite{lo} with $\Psi$ one obtains
the above formula.
\end{proof}

\section{ Module algebras paired with comodule algebras}
Let $\Hc$ be a Hopf algebra, $A$ a   left $\Hc$-module algebra,
 $B$ a
left $\Hc$-comodule algebra and $M$ be a right-left SAYD module
over $\Hc$.
One  constructs a crossed product algebra whose underlying vector
 space is $A\ot B$ with the $1\al 1$ as its unit and the following
 multiplication:
\begin{equation}
(a\al b)(a'\al b')= ab\ns{-1}(a')\al b\ns{0}b'
\end{equation}
Now consider  the two cocyclic modules
$$(C^\ast_\Hc(A,M), \d_i,\s_j,t), \quad \text{and}\quad
 (^\Hc C^\ast(B,M), d_i,s_j,\tau)$$
 introduced in \cite{hkrs2}  and let us make a new
bicocyclic module by just tensoring these two ones.  Its  $(p, q)$
component $C^{p,q}$ is given by
 $$\Hom_\Hc(M\ot A^{\ot p+1},\Cb)\ot ^\Hc \Hom(B^{\ot q+1}, M),$$
 with horizontal structure $\hd_i=d_i\ot\Id$,
$\hs_j=s_j\ot\Id$, and $\hta=t\ot \Id$ and vertical structure
$\vd_i=\d_i\ot\Id$, $\vs_j=\s_j\ot\Id$, and $\vta=\tau\ot \Id$.
Obviously $(C^{n,m},\hd,\hs,\hta,\vd,\vs,\vta)$  defines a
bicocyclic module. Now let us define  the following map
\begin{align}\label{abpsi}
&\Psi:C^{n,n}\ra \Hom((A\al B)^{\ot n+1},\Cb),\\\notag
& \Psi(\phi\ot\psi)(a^0\al b^0\odots a^n\al b^n)=\\\notag
&\phi(\psi(b^0\ns{0}\odots b^n\ns{0})\ot  S^{-1}(b^0\ns{1}\dots
 b^n\ns{-1})a^0\odots S^{-1}(b^n\ns{-n-1})a^n).
\end{align}
\begin{proposition}
The map $\Psi$ defines a  cyclic map between  the diagonal of
$C^{\ast,\ast}$ and the cocyclic module $C^\ast(A\al B)$.
\end{proposition}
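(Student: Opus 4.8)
The plan is to verify directly --- following the template used above for $\Psi_c$ --- that $\Psi$ intertwines the diagonal cocyclic structure of $C^{\ast,\ast}$ with the cocyclic structure of the cyclic module $C^\ast(A\al B)$ of the crossed product algebra. In degree $n$ the diagonal $\FD(C^{\ast,\ast})$ carries the faces $\hd_i\vd_i$, the degeneracies $\hs_j\vs_j$ and the cyclic operator $\hta\vta$, which apply simultaneously the comodule-algebra operators of $B$ to $\psi$ and the module-algebra operators of $A$ to $\phi$, whereas $C^\ast(A\al B)$ carries the usual Hochschild faces built from the crossed product $(a\al b)(a'\al b')=ab\ns{-1}(a')\al b\ns{0}b'$ (the last one wrapping $a^{n+1}\al b^{n+1}$ to the front), the degeneracies inserting $1\al 1$, and the cyclic permutation. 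Hence it suffices to establish $\Psi\circ(\hd_i\vd_i)=\p_i\circ\Psi$, $\Psi\circ(\hs_j\vs_j)=\s_j\circ\Psi$ and $\Psi\circ(\hta\vta)=\tau\circ\Psi$ on $C^{n,n}$. First, though, I would record that $\Psi$ is well defined on the tensor product $C^{n,n}$, which is immediate once one uses that $\phi$ is $\Hc$-linear, that $\psi$ is $\Hc$-colinear, and that the coaction on $B$ is an algebra map --- precisely the ingredients that made $\Psi_c$ well defined.

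The routine part is the inner faces $\p_i$, $0\le i\le n$, together with the degeneracies. For an inner face the crossed product merges two adjacent entries into $a^ib^i\ns{-1}(a^{i+1})\al b^i\ns{0}b^{i+1}$; expanding $\Psi(\phi\ot\psi)$ on the merged tuple and comparing with $\Psi(\hd_i\vd_i(\phi\ot\psi))$, where $\hd_i$ multiplies the corresponding slots of $\psi$ and $\vd_i$ multiplies the two $A$-slots of $\phi$, the two sides coincide by the module-algebra axiom of $A$ (so that $b^i\ns{-1}$ distributes over the product), the comodule-algebra axiom of $B$, and coassociativity of the coaction used to realign the surviving $S^{-1}$-factors. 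The degeneracies are even simpler, as both sides insert $1\al 1$ and the extraneous coaction labels are absorbed by the counit. These are lengthy but mechanical Sweedler manipulations, so I would write out a single representative case and leave the rest to the reader, as is done for $\Psi_c$.

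The hard part will be the cyclic operator $\hta\vta$, which activates all three twists at once. On the $C^\ast(A\al B)$ side the cyclic permutation carries $a^n\al b^n$ to the leading slot, producing the crossed-product contributions $b^n\ns{0}$ and $b^n\ns{-1}(a^0)$; on the diagonal side the comodule cyclic operator feeds $\psi(b^n\ns{0}\ot b^0\ns{0}\odots b^{n-1}\ns{0})\,b^n\ns{-1}\in M$ into the module cyclic operator $\phi(m\sns{0}\ot S^{-1}(m\sns{-1})a^n\ot a^0\odots a^{n-1})$. Matching the two forces one to compute the SAYD coaction of $M$ on the twisted element $\psi(\cdots)\,b^n\ns{-1}$ by means of the stability and anti-Yetter--Drinfeld conditions \eqref{SAYD1}--\eqref{SAYD2}, to invoke coassociativity of the $B$-coaction on the iterated labels thrown off by the cyclic operator, and then to transfer the resulting $\Hc$-labels between the $M$-slot and the $A$-slots using the $\Hc$-linearity of $\phi$, finally collapsing composites of the form $S^{-1}(m\sns{-1})m\sns{-2}$ to the counit just as in the $\Psi_c$ computation. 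This simultaneous telescoping of the antipode, the module coaction, and the comodule coaction --- made possible precisely by the invertibility of $S$ and the SAYD axioms --- is the crux of the argument and the place where all the bookkeeping must be controlled. Once $\tau$ is matched, the wrap-around face $\p_{n+1}$ is determined by $\p_n$ and $\tau$ through the cocyclic relations \eqref{ci} and the invertibility of $\tau$, so no further computation is needed and the proof is complete.
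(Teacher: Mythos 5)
Your proposal is correct and follows essentially the same route as the paper: a direct check that $\Psi$ intertwines the generators of the cocyclic structures, carried out explicitly for a representative face operator (via the module-algebra and comodule-algebra axioms) and for the cyclic operator (via equivariance of $\phi$, colinearity of $\psi$, the anti-Yetter--Drinfeld condition and antipode cancellations), with the remaining verifications left as routine. Your additional observation that the wrap-around face then comes for free from the cocyclic relations \eqref{ci} is a harmless refinement of the paper's ``leave the rest to the reader.''
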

\begin{proof}
We have to show that $\Psi$  commutes with the cyclic structures.
 We shall  check it for the first  face operator and the   cyclic
operator and leave the rest  to the reader.
\begin{align*}
&\Psi(\hd_0\vd_0 (\phi\ot\psi))(a^0\al b^0\odots a^{n+1}\al b^{n+1})=\\
&\Psi (d_0\phi\ot\d_0\psi))(a^0\al b^0\odots a^{n+1}\al b^{n+1})=\\
& \d_0\phi(\d_0\psi(b^0\ns{0}\odots b^{n+1}\ns{0})    \ot  S^{-1}
(b^0\ns{-1}\dots b^{n+1}\ns{-1})a^0\ot\dots\\
&\hspace{7.3cm}\dots\ot  S^{-1}(b^{n+1}\ns{-n-2})a^{n+1}  )=\\
& \phi(\psi(b^0\ns{0}b^1\ns{0}\odots b^{n+1}\ns{0})    \ot \\
&\hspace{2cm} \ot S^{-1}(b^0\ns{-1}\dots b^{n+1}\ns{-1})a^0S^{-1}
(b^1\ns{-2}\dots b^{n+1}\ns{-2})a^1\ot\dots\\
&\hspace{7.3cm}\dots\ot  S^{-1}(b^{n+1}\ns{-n-2})a^{n+1}  )=\\
& \phi(\psi(b^0\ns{0}b^1\ns{0}\odots b^{n+1}\ns{0})    \ot \\
&\hspace{2cm} \ot S^{-1}(b^0\ns{-1}\dots b^{n+1}\ns{-1})a^0S^{-1}
(b^1\ns{-2}\dots b^{n+1}\ns{-2})a^1\ot\dots\\
&\hspace{7.3cm}\dots\ot  S^{-1}(b^{n+1}\ns{-n-2})a^{n+1}  )=\\
& \phi(\psi(b^0\ns{0}b^1\ns{0}\odots b^{n+1}\ns{0}) \ot\\
&  \hspace{2cm}\ot S^{-1}(b^0\ns{-1}b^1\ns{-1}\dots b^{n+1}\ns{-1})
( a^0 b^0\ns{-2} a^1)\ot\dots\\
&\hspace{7.3cm}\dots\ot  S^{-1}(b^{n+1}\ns{-n-1})a^{n+1}  )=\\
&\Psi(\phi\ot\psi)(a^0 b^0\ns{-1}a^1\al b^0\ns{0}b^1\ot  a^2\al
 b^2\odots a^{n+1}\al b^{n+1})=\\
&d_0\Psi(\phi\ot \psi)(a^0\al b^0\odots a^{n+1}\al b^{n+1}).
\end{align*}
Using the facts that $\phi$ is $\Hc$ equivariant, $\psi$ is $\Hc$
colinear and $M$ is SAYD one has:
\begin{align*}
&\Psi(\hta\vta(\phi\ot\psi))(a^0\al b^0\odots a^{n}\al b^{n})= \\
&\Psi(t\phi\ot\tau\psi)(a^0\al b^0\odots a^{n}\al b^{n})=\\
&t\phi(\tau\psi(b^0\ns{0}\odots b^n\ns{0})\ot  S^{-1}(b^0\ns{1}\dots
 b^n\ns{-1})a^0\odots S^{-1}(b^n\ns{-n-1})a^n)=\\
&t\phi(\psi(b^n\ns{0}\ot b^0\ns{0}\odots b^{n-1}\ns{0}) b^n\ns{-1}\ot \\
&\hspace{2.5cm}\ot S^{-1}(b^0\ns{1}\dots b^{n-1}\ns{-1}b^n\ns{-2})a^0
\odots S^{-1}(b^n\ns{-n-2})a^n)=\\
&\phi([\psi(b^n\ns{0}\ot b^0\ns{0}\odots b^{n-1}\ns{0}) b^n\ns{-1}]
\ns{0}\ot \\
& S^{-1}([\psi(b^n\ns{0}\ot b^0\ns{0}\odots b^{n-1}\ns{0}) b^n\ns{-1}]
\ns{-1}(S^{-1}(b^n\ns{-n-2})a^n)\ot\\
&\ot S^{-1}(b^0\ns{1}\dots b^{n-1}\ns{-1}b^n\ns{-2})a^0\odots S^{-1}
(b^{n-1}\ns{-n+1}b^n\ns{-n-1})a^{n-1})=\\
&\phi(\psi(b^n\ns{0}\ot b^0\ns{0}\odots b^{n-1}\ns{0})b^n\ns{-3}\ot \\
& S^{-1}(S(b^n\ns{-2})b^n\ns{-1} b^0\ns{-1}\dots b^{n-1}\ns{-1})
b^n\ns{-4})(S^{-1}(b^n\ns{-n-2})a^n)\ot\\
&\ot S^{-1}(b^0\ns{1}\dots b^{n-1}\ns{-1}b^n\ns{-5})a^0\odots
S^{-1}(b^{n-1}\ns{-n+1}b^n\ns{-n-4})a^{n-1})=\\
&\phi(\psi(b^n\ns{0}\ot b^0\ns{0}\odots b^{n-1}\ns{0})b^n\ns{-1}\ot \\
& S^{-1}( b^0\ns{-1}\dots b^{n-1}\ns{-1}) b^n\ns{-2})
(S^{-1}(b^n\ns{-n-2})a^n)\ot\\
&\ot S^{-1}(b^0\ns{1}\dots b^{n-1}\ns{-1}b^n\ns{-3})
a^0\odots S^{-1}(b^{n-1}\ns{-n+1}b^n\ns{-n-2})a^{n-1})=\\
&\phi(\psi(b^n\ns{0}\ot b^0\ns{0}\odots b^{n-1}\ns{0})
\ot  S^{-1}( b^n\ns{-1}b^0\ns{-1}\dots b^{n-1}\ns{-1} )a^n)\ot\\
&\ot S^{-1}(b^0\ns{1}\dots b^{n-1}\ns{-1})a^0\odots
S^{-1}(b^{n-1}\ns{-n+1})a^{n-1})=\\
&\Psi(\phi\ot\psi)(a^n\al b^n \ot a^0\al b^0\odots
a^{n-1}\al b^{n-1})=\\
&t\Psi(\phi\ot\psi)(a^0\al b^0\odots a^{n}\al b^{n}).
\end{align*}
\end{proof}
\begin{corollary}
The map $\Psi$  defined in \eqref{abpsi} induces a map on cyclic
cohomologies:
\begin{align}
&\Psi: HC^n( \FD(C^{\ast,\ast}))\ra HC^n(A\al B).
\end{align}
\end{corollary}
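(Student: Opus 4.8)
The plan is to obtain the Corollary as a formal consequence of the preceding Proposition together with the functoriality of cyclic cohomology. Recall that $HC^\ast(-)$ is a functor from cocyclic modules to graded vector spaces: a morphism of cocyclic modules is by definition a family of linear maps commuting with all faces $\p_i$, degeneracies $\s_j$ and the cyclic operator $\tau$, and any such family automatically commutes with the Hochschild coboundary $b=\sum_i(-1)^i\p_i$ and with the Connes operator $B=A\circ B_0$, where $B_0=\s_{n-1}\tau\bigl(1-(-1)^n\tau\bigr)$ and $A=1+\lb+\cdots+\lb^n$ with $\lb=(-1)^{n-1}\tau$, simply because each of $b$, $B_0$ and $A$ is assembled out of these same structure maps. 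Thus a morphism of cocyclic modules is in particular a morphism of the associated $(b,B)$-bicomplexes, and it descends to the cohomology of the total complexes $(\Tot,\,b+B)$.

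Concretely, the preceding Proposition establishes exactly that $\Psi$ is such a morphism: it intertwines the diagonal cocyclic structure $(\hd_i\vd_i,\,\hs_j\vs_j,\,\hta\vta)$ of $\FD(C^{\ast,\ast})$ with the cocyclic structure $(\p_i,\s_j,\tau)$ of $C^\ast(A\al B)$. First I would record, as in the paragraph above, that compatibility with $\p_i$, $\s_j$ and $\tau$ forces compatibility with $b$ and $B$; hence $\Psi$ is a chain map between the total complex computing $HC^\ast(\FD(C^{\ast,\ast}))$ and the total complex computing $HC^\ast(C^\ast(A\al B))=HC^\ast(A\al B)$. Passing to cohomology in each degree then yields the asserted natural map $\Psi:HC^n(\FD(C^{\ast,\ast}))\ra HC^n(A\al B)$.

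The genuine content of the construction has already been discharged in the Proposition, namely the verification, using that $\phi$ is $\Hc$-equivariant, $\psi$ is $\Hc$-colinear and $M$ is SAYD, that $\Psi$ is well defined on the crossed product $A\al B$ and commutes with the whole cyclic structure. Consequently I do not expect any genuine obstacle at the level of the Corollary itself; the only remaining point is the formal observation that $b$ and $B$ are polynomial in the $\p_i$, $\s_j$, $\tau$, so that a map of cocyclic modules is a fortiori a map of mixed complexes and therefore induces a map on $HC^\ast$. No further computation is required beyond invoking the functor $HC^\ast(-)$.
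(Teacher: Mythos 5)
Your proposal is correct and coincides with what the paper does: the paper states the corollary as an immediate consequence of the preceding Proposition, relying on exactly the functoriality you spell out, namely that a map of cocyclic modules commutes with $b$ and $B$ (since these are built from the $\p_i$, $\s_j$, $\tau$) and hence descends to cyclic cohomology. Your write-up simply makes this implicit step explicit; no discrepancy with the paper's approach.
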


 Now by composing $\Psi$   with  the corresponding
map $AW$ map one proves the existence  of the following map:
\begin{align}\label{abmap}
&\cup=\Psi\circ AW :HC^p_H(A,M)\ot \;\;^\Hc HC^q(B,M)\ra
HC^{p+q}(A\al B).
\end{align}
One uses  the formula of $AW$ map \cite{lo} to find the following
expression for the above cup product.
\begin{proposition}
The above cup product has the following formula in the level of
Hochschild cohomology.
\begin{align*}
&\phi\cup\psi(a^0\al b^0\odots a^{p+q}\al b^{p+q})=\\
&\phi(\psi(b^{q+1}\ns{0}\dots b^{p+q}\ns{0}b^0\ns{0}\ot b^1
\ns{0}\odots b^q\ns{0})\ns{-1}\ot \\
&  S^{-1}(b^0\ns{-1}\dots b^{q}\ns{-1})a^0\dots
S^{-1}(b^q\ns{-q-1})a^{q}\ot a^{q+1} \ot b^{q+1}\ns{-p-1}
 a^{q+2} \odots\\
& b^{q+1}\ns{-1}\dots b^{p+q-1}\ns{-1}a^{p+q}.
\end{align*}

\begin{example}{ \rm Let $G$ be a discrete group acting by unital
automorphisms on an algebra $A$ and let $k$ be a field of
characteristic zero. In \cite{kr2}, the Hopf cyclic cohomology
groups of the Hopf algebra $H=k G$ were computed in terms of group
cohomology with trivial coefficients:
$$^{k G}HC^p(k G, k)=\bigoplus_{i\geq 0} H^{p-2i} (G, k).$$
The cohomology groups $HC^q_{k G}(A,k)$ are easily seen to be the
cohomology of the subcomplex of {\it invariant cyclic cochains} on
$A$:
$$\varphi (ga_0, ga_1, \cdots, ga_n)=\varphi (a_0, a_1, \cdots, a_n),$$
for all $g \in G$ and $a_i \in A$. We denote this cohomology theory
by $HC^q_G(A)$. We have thus a pairing
$$H^p(G) \otimes HC^q_G(A) \longrightarrow HC^{p+q}(A\rtimes G).$$}
\end{example}

\end{proposition}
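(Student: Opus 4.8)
The plan is to unwind the definition $\cup=\Psi\circ AW$ of \eqref{abmap}, in which $\Psi$ is the cyclic map \eqref{abpsi} and $AW$ is the Alexander--Whitney map implementing the Eilenberg--Zilber comparison between the total complex $\Tot(C^{\ast,\ast})$ and the diagonal $\FD(C^{\ast,\ast})$. Since the statement is only at the level of Hochschild cohomology, I may discard the $B$-operator and work purely with the cosimplicial (coface) structure. First I would record the explicit normalized form of $AW$ from \cite{lo}: on the bidegree $(p,q)$ summand $\Hom_\Hc(M\ot A^{\ot p+1},\Cb)\ot {}^\Hc\Hom(B^{\ot q+1},M)$ it is a signed composite of iterated cofaces that raises $\phi$ from degree $p$ and $\psi$ from degree $q$ to the common diagonal degree $p+q$, using the back cofaces on one factor and the iterated front coface $\p_0$ on the other. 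Every elementary coface occurring here is one of the operators of $C^\ast_\Hc(A,M)$ and of ${}^\Hc C^\ast(B,M)$ recalled in Section~1, so $AW(\phi\ot\psi)$ is obtained mechanically; in particular the front cofaces on the $B$-factor simply multiply adjacent entries of $B$, with no coaction yet intervening.

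Next I would substitute $AW(\phi\ot\psi)$ into \eqref{abpsi} and evaluate on a generic element $a^0\al b^0\odots a^{p+q}\al b^{p+q}$ of the crossed product $A\al B$. Two effects must be tracked at once. On the one hand, $\Psi$ evaluates the raised $B$-cochain on $b^0\ns{0}\odots b^{p+q}\ns{0}$, so that the block of $b$'s that the front cofaces had fused collapses, by the algebra multiplication of $B$, into a single argument of $\psi$; after the relabelling dictated by \eqref{abpsi} this becomes the cyclically ordered product $b^{q+1}\ns{0}\dots b^{p+q}\ns{0}b^0\ns{0}$ sitting in the first slot of $\psi$, the remaining slots receiving $b^1\ns{0}\odots b^q\ns{0}$. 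On the other hand, the twisting factors $S^{-1}(b^0\ns{-1}\dots b^n\ns{-1})a^0,\dots,S^{-1}(b^n\ns{-n-1})a^n$ built into $\Psi$, now applied at the raised degree $n=p+q$, generate the nested comodule legs $b^{q+1}\ns{-p-1},\dots,b^{p+q-1}\ns{-1}$ that twist the trailing $A$-entries $a^{q+2},\dots,a^{p+q}$, while the back cofaces on the $A$-factor supply the crossed-product structure in the first group of entries.

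Finally I would collapse the resulting expression using the structural identities, exactly as in the verification that $\Psi$ is a cyclic map: coassociativity and counitality of the $\Hc$-coaction on $B$, the comodule-algebra compatibility, the antipode relations such as $S^{-1}(x\ps{2})x\ps{1}=\ve(x)1$, and the SAYD conditions \eqref{SAYD1} and \eqref{SAYD2}, which are what permit the outermost coaction leg to be absorbed onto $\phi$. I expect the main obstacle to be precisely this coaction bookkeeping. Unlike the module-coalgebra case of Proposition~\ref{ccup}, where composing $AW$ with $\Psi_c$ is essentially immediate because no coaction intervenes, here applying $\Psi$ at the raised degree spawns a tall tower of iterated coactions, and one must pair off every intermediate leg against an $S^{-1}(\cdots)$ factor from \eqref{abpsi} so that the telescoping antipode cancellations leave only the legs displayed in the statement. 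Once this matching is organised and the cyclic reindexing of the $b$'s is tracked with care, the asserted formula drops out.
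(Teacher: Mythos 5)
Your overall strategy coincides with the paper's, which offers only the one-line justification that one composes $\Psi$ of \eqref{abpsi} with the Alexander--Whitney map of \cite{lo}: discard the $B$-operator, raise $\phi$ and $\psi$ to the diagonal by iterated cofaces, substitute into \eqref{abpsi}, and simplify using equivariance, the comodule-algebra compatibility, antipode identities and the SAYD conditions. The genuine gap is in your assignment of which tensor factor receives which cofaces, and it is not a harmless convention choice here, because the proposition asserts one specific cochain-level formula. You put the iterated front coface $\p_0$ on the $B$-factor and the back cofaces on the $A$-factor. But iterating $\p_0$ on ${}^\Hc C^q(B,M)$ produces $\psi(b^0b^1\cdots b^p\ot b^{p+1}\odots b^{p+q})$: the product of the \emph{first} $p+1$ entries, with no wraparound and no coaction. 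Since $\Psi$ evaluates its $B$-cochain on the ordered tuple $(b^0\ns{0},\dots,b^{p+q}\ns{0})$ without any permutation, no ``relabelling dictated by \eqref{abpsi}'' can convert this into the wraparound product $b^{q+1}\ns{0}\cdots b^{p+q}\ns{0}b^0\ns{0}$ occupying the first slot of $\psi$ in the stated formula; your computation would terminate at a different (merely cohomologous) cochain, not the one claimed.

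The stated formula forces the transposed assignment, exactly parallel to Proposition~\ref{ccup}: the iterated front coface $\d_0^q$ acts on the $A$-factor $\phi$ --- this is what fuses the first $q+1$ twisted entries $S^{-1}(b^0\ns{-1}\cdots b^q\ns{-1})a^0\cdots S^{-1}(b^q\ns{-q-1})a^q$ into the zeroth slot of $\phi$ --- while the iterated \emph{last} coface acts on the $B$-factor $\psi$, and that coface does involve the coaction: each application contributes a wraparound factor in slot zero and a leg $b\ns{-1}$ multiplying the $M$-valued output of $\psi$. Those output legs are essential rather than incidental: after substitution into \eqref{abpsi} they are absorbed through the $\Hc$-linearity of $\phi$ together with the SAYD identities, and precisely this absorption cancels the $S^{-1}$ twists on the trailing entries, leaving the bare $a^{q+1}$ and the positive legs $b^{q+1}\ns{-p-1}a^{q+2},\dots, b^{q+1}\ns{-1}\cdots b^{p+q-1}\ns{-1}a^{p+q}$ displayed in the statement. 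So your two concrete structural claims --- that on the $B$-side ``no coaction yet intervenes'' from $AW$, and that the back cofaces on the $A$-side supply the first group of entries --- are exactly backwards; once the assignment is corrected, the remainder of your outline (coassociativity, telescoping antipode cancellations, SAYD bookkeeping) is the right plan and carries through to the asserted formula.
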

\section{ Cup product via traces}
In this section we derive some formulas for cup products defined in
\cite{kr1}.

Let us briefly recall it here. Let $A$ be a left $\Hc$ module
algebra, $B$  a left $\Hc$- comodule algebra and $M$ a SAYD module
on $\Hc$. Let
 also $\Om A$ be a DG $\Hc$-module algebra over $A$ and $\G B$ be a
 DG $\Hc$-comodule algebra over $B$.
 We recall that a closed $M$-trace on $\Om A$ is a linear map $\int: M\ot \Om A\ra
 \Cb$ such that
 \begin{align}
 &\int(h\ps{1}m\ot h\ps{2}\om)=\epsilon(h)\int(m\ot \om),\\
&\int( m\ot d\om)=0,\\
&\int(m\ot \om^1\ot \om^2)= (-1)^{\rm
deg(\om^1)deg(\om^2)}\int(m\sns{0}\ot S^{-1}(m\sns{-1})\om^2\om^1).
 \end{align}

Similarly a closed  $M$-trace on $\G B$ is defined as a linear map
$\int: \G B\ra M$ such that,
\begin{align}
& (\int \g)\sns{-1} \ot (\int \g)\sns{0}= \g\ns{-1}\ot \int(
\g\ns{0}),\\
& \int(d\g)=0,\\
 &\int(\g^1\g^2)=\int(\g^2\ns{0}\g^1)\g^2\ns{-1}.
\end{align}

 One identifies  closed cyclic cocycles  $\phi \in C^p_\Hc(A,M)$
 and $\psi\in ^\Hc HC^q(B,M)$
with  closed $M$-traces on $\Om(A)$ and $\G(B)$, the universal
$\Hc$-module DG algebra and $\Hc$-comodule algebra respectively, as
follows:
\begin{align}
\int_\phi m\ot a^0da^1\dots da^n=\phi (m\ot a^0\odots a^p)\\
\int_\psi(b^0db^1\dots db^n)= \psi(b^0\odots b^q)
\end{align}

Then one forms a DG algebra over $A\al B$ as the crossed product of
$\Om(A)$ and $\G(B)$, which we denote it by $\Om(A)\al\G(B)$. For
any  two closed $M$-traces  $\int_1$ and $\int_2$ on $\Om(A)$ and
$\G(B)$ one defines \cite{kr1} the closed trace  $\int_1\cup\int_2$
on $\Om(A)\al\G(B)$ by
\begin{equation}
(\int_1\cup\int_2)(\om\al \eta)= \int_1(\int_2(\om)\ot \eta),
\end{equation}

and hence the cup product of two cyclic cocycle is defined by
\begin{align*}
&(\phi\cup\psi)(a^0\al b^0\ot a^1\al b^1\odots a^{p+q}\al b^{p+q})=\\
&(\int_\phi\cup\int_\psi)(a^0\al b^0 d( a^1\al b^1)\dots
d(a^{p+q}\al b^{p+q})).
\end{align*}
Now we want to derive a formula for the above cup product. To this
end  we need to know the $(p,q)$ component of the form
 $$\theta^n=a^0\al b^0 d( a^1\al b^1)\dots d(a^{n}\al b^{n}).$$
 For  a permutation $\s\in Sh(q,p)$ we use $\bar\s(i)=\s(i)-1$, and
 $\hat\s(i)=\s(i)+1$, and define the following $p+q$ form
\begin{align*}
&\theta^n_\s= a^0 (b^0\dots b^{n-1})\ns{-n}a^1 \dots
(b^{\bar\s(q+1)-1}\dots
b^{n-1})\ns{-n+\s(q+1)-2}a^{\bar\s(q+1)}\\
&d((b^{\bar\s(q+1)}\dots b^{n-1})\ns{-n+\bar\s(q+1)}a^{\s(q+1)}\dots
(b^{\bar\s(q+2)-1}\dots
b^{n-1})\ns{-n+\s(q+2)-2}a^{\bar\s(q+2)})\dots\\
&\dots  d((b^{\bar\s(n)}\dots
b^{n-1})\ns{-n+\bar\s(n)}a^{\s(n)}\dots b^{n-1}\ns{-1}a^{n})\al\\
&b^0\ns{0}\dots b^{\bar\s(1)}\ns{0} d(b^{\s(1)}\ns{0}\dots
b^{\bar\s(2)}\ns{0})\dots d(b^{\s(q)}\ns{0}\dots b^{n-1}\ns{0}b^n).
\end{align*}
\begin{lemma}
The  $(q,p)$th component of the above form $\theta^{p+q}$ is given
by the following formula.
\begin{align*}
&\sum_{\s\in Sh(q,p)}(-1)^\s \theta^{p+q}_\s.
\end{align*}
\end{lemma}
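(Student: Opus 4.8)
The plan is to expand the form $\theta^{p+q}$ directly, using that the total differential $d$ on the crossed product DG algebra $\Om(A)\al\G(B)$ is a graded derivation, and then to collect the terms of the prescribed bidegree. Since each $a^i\al b^i=(a^i\al 1)(1\al b^i)$ sits in total degree zero, the Leibniz rule gives
\[
d(a^i\al b^i)=da^i\al b^i+a^i\al db^i ,
\]
with no sign, where $da^i$ raises the $\Om(A)$-degree by one and $db^i$ raises the $\G(B)$-degree by one. Multiplying out $(a^0\al b^0)\,d(a^1\al b^1)\cdots d(a^{p+q}\al b^{p+q})$ therefore produces $2^{p+q}$ terms, each recording, for every $i\in\{1,\dots,p+q\}$, whether the factor contributed its ``$A$-differential'' summand $da^i\al b^i$ or its ``$B$-differential'' summand $a^i\al db^i$.

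First I would isolate the $(q,p)$-component. A term carries $\Om(A)$-degree $p$ and $\G(B)$-degree $q$ exactly when precisely $q$ of the factors contribute a $db$ and the remaining $p$ contribute a $da$. Such choices are in bijection with $Sh(q,p)$: a shuffle $\s$ is determined by the increasing string $\s(1)<\dots<\s(q)$, which I take to mark the positions carrying the $\G(B)$-differentials, while the complementary increasing string $\s(q+1)<\dots<\s(q+p)$ marks the positions carrying the $\Om(A)$-differentials. This matches the index pattern of $\theta^{p+q}_\s$, whose $\G(B)$-part opens its $q$ differentials at $b^{\s(1)},\dots,b^{\s(q)}$ and whose $\Om(A)$-part opens its $p$ differentials at $a^{\s(q+1)},\dots,a^{\s(n)}$, and it accounts for the count $|Sh(q,p)|=\binom{p+q}{q}$.

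Next I would put each selected term into the normal form in which all $\Om(A)$-factors precede the symbol $\al$ and all $\G(B)$-factors follow it. This is achieved by repeatedly applying the crossed product rule $(\om\al\g)(\om'\al\g')=\pm\,\om\,\g\ns{-1}(\om')\al\g\ns{0}\g'$ to slide each comodule form rightward past the module forms standing to its left. Two effects are produced. First, the coaction $\g\ns{-1}$ of each comodule form acts on every module form it crosses, and the cumulative action of these legs on the blocks of $a$'s is exactly what generates the leg subscripts $\ns{-n}$, $\ns{-n+\s(q+j)-2}$, $\ns{-n+\bar\s(q+j)}$ recorded in $\theta^{p+q}_\s$. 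Second, each time a degree-one $db$ is transposed past a degree-one $da$ the Koszul rule contributes a factor $-1$, so the signs accumulated while sorting a given term sum to the inversion number of the associated shuffle, i.e.\ to $(-1)^\s$. Summing over all shuffles then yields $\sum_{\s\in Sh(q,p)}(-1)^\s\,\theta^{p+q}_\s$.

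The main obstacle is the simultaneous bookkeeping of the comodule leg indices and the Koszul signs. Concretely, one must verify that the number of coactions landing on the block of module forms lying between two consecutive comodule differentials matches the shift recorded in the subscript (so that $\bar\s(q+j)=\s(q+j)-1$ and the companion $\s(q+j)-2$ shifts come out correctly), and that the total Koszul sign from interleaving the $q$ comodule differentials with the $p$ module differentials reproduces $(-1)^\s$ with no residual sign. I expect both to follow from a routine induction on $p+q$, organized by the location of the rightmost comodule differential; the conceptual content is only that, in a crossed-product DG algebra, the differential of a product interleaves the two tensor factors in the Eilenberg--Zilber (shuffle) manner, which is precisely the structure already invoked in the preliminaries.
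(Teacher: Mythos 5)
Your proposal is correct and is essentially the paper's own argument: the paper proves the lemma by precisely the induction on $p+q$ that you invoke for the bookkeeping, splitting off the last factor $d(a^{p+q}\al b^{p+q})$ via Leibniz into $da^{p+q}\al b^{p+q}$ and $a^{p+q}\al db^{p+q}$, applying the inductive hypothesis to the $(p-1,q)$ and $(p,q-1)$ components of the shorter form, and matching the two resulting sums with the shuffles $\s\in Sh(q,p)$ satisfying $\s(p+q)=p+q$ and $\s(q)=p+q$ respectively. Your global $2^{p+q}$-term Leibniz expansion, the bijection of bidegree-$(q,p)$ terms with $Sh(q,p)$, and the Koszul-sign sorting into normal form constitute the unrolled version of that same induction, so the two proofs coincide in substance.
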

\begin{proof}
We prove it by induction . Obviously it is true for $(p,q)=(0,0)$.
Assume that the lemma is true for all $(p,q)$ such that $p+q=n$, we
prove it for all $(p,q)$ that $p+q=n+1$.

The $(p,q)$th  component of $a^0\al b^0 d( a^1\al b^1)\dots
d(a^{p+q}\al b^{p+q})$ is

$\theta(da^{p+q}\al b^{p+q}) +\theta'(a^{p+q}\al db^{p+q})$, where
$\theta$ and $\theta'$ are $(p-1,q)$th, and $(p,q-1)$th component of
$a^0\al b^0 d( a^1\al b^1)\dots d(a^{p+q-1}\al b^{p+q-1})$
respectively. Now let $\mu\in Sh(q,p-1)$, one observes that
\begin{align*}
&\sum_{\s\in Sh(q-1,p)}(-1)^\s \theta^{p+q-1}_\s(a^{p+q}\al
db^{p+q})+\sum_{\s\in Sh(q,p-1)}(-1)^\s
\theta^{p+q-1}_\s(da^{p+q}\al b^{p+q})=\\
&\sum_{\s\in Sh(q,p)\quad \s(q)=p+q}(-1)^\s
\theta^{p+q}_\s+\sum_{\s\in Sh(q,p)\quad \s(p+q)=p+q }(-1)^\s
\theta^{p+q}_\s=\\
&\sum_{\s\in Sh(q,p)}(-1)^\s \theta^{p+q}_\s.
\end{align*}
\end{proof}

As a result one  has the following formula for cup product via
traces.
\begin{proposition}
 Let $\phi\in C^p_\Hc(A,M)$ and $\psi\in ~^\Hc C^q(B,M)$ respectively be two Hopf cyclic
  cocycles on $A$ and $B$ with coefficients   in a SAYD module  $M$.
 Then  $\phi\cup \psi\in C^{p+q}(A\rtimes B)$ is a cyclic cocycle and is
  precisely  given by the following formula
 \begin{align*}
&(\phi\cup\psi)(a^0\al b^0\ot a^1\al b^1\odots a^{p+q}\al b^{p+q})=\\
&\sum_{\s\in Sh(q,p)}(-1)^\s \p_{\bar\s(q)}\dots
\p_{\bar\s(1)}\phi(\p_{\bar\s(q+p)}\dots
\p_{\bar\s(q+1)}\psi(b^0\ns{0}\odots b^{p+q-1}\ns{0}b^{p+q})\ot\\
&\hspace{5cm} a^0\ot b^0\ns{-p-q}a^1\odots b^0\ns{-1}\dots
b^{p+q-1}\ns{-1}a^{p+q}).
\end{align*}
\end{proposition}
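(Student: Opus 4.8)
The plan is to reduce the explicit cup-product formula to two ingredients already established in the excerpt: the cyclic map $\Psi$ of \eqref{abpsi} (shown to commute with the cyclic structure in the preceding Proposition), and the explicit form of the Alexander–Whitney map recorded in \cite{lo}. Since the cup product is defined as $\cup = \Psi \circ AW$ in \eqref{abmap}, the entire statement is really a bookkeeping computation: one feeds the standard $AW$ shuffle/front-face--back-face expression into $\Psi$ and reads off the resulting cochain on $(A\al B)^{\ot p+q+1}$. So first I would write $AW: \FD(C^{\ast,\ast}) \ra \Tot(C\ot C')$ in the normalized form of Loday, namely as the alternating sum over $(p,q)$-shuffles of the composites of front faces $\vd_{p+1}\cdots$ on the $A$-factor and back faces on the $B$-factor, applied to the diagonal element $\phi\ot\psi$ sitting in bidegree $(p+q,p+q)$.

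Next I would separately compute the two halves. On the $\phi$-side (the module-algebra factor), applying the appropriate iterated faces $\d_i$ just merges and permutes the $A^{\ot}$-slots; because $\phi \in C^p_\Hc(A,M)$ is $\Hc$-equivariant, the antipode factors $S^{-1}(b^0\ns{-1}\dots)$ produced by $\Psi$ can be pulled through $\phi$ and combined using the SAYD condition \eqref{SAYD2} exactly as in the cyclic-invariance computation of the previous Proposition. On the $\psi$-side (the comodule-algebra factor), the back faces $d_i$ act on $\psi \in {}^\Hc C^q(B,M)$ and, using $\Hc$-colinearity of $\psi$ together with the comodule-algebra structure, reorganize the comultiplied entries $b^j\ns{0}$ and the coaction legs $b^j\ns{-1}$ into the pattern displayed in the statement. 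The crucial point is that the front indices $q+1,\dots,p+q$ attach to $\phi$ and the back indices $0,\dots,q$ attach to $\psi$, which is precisely what the shuffle decomposition of $AW$ enforces.

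The main obstacle I anticipate is the careful tracking of the coaction indices $b^j\ns{-k}$ through $\Psi$. The definition \eqref{abpsi} already inserts a cascade $S^{-1}(b^0\ns{1}\dots b^n\ns{-1})a^0, \dots, S^{-1}(b^n\ns{-n-1})a^n$, and after applying the iterated faces coming from $AW$ these legs must be recombined so that the coefficient on $a^{q+1},\dots,a^{p+q}$ collapses to the clean $b^{q+1}\ns{-p-1}, \dots, b^{q+1}\ns{-1}\dots b^{p+q-1}\ns{-1}$ appearing in the final line. This is the same Hopf-algebraic cancellation, driven by $S(h\ps{1})h\ps{2}=\ve(h)1$ and the SAYD identities, that made the cyclic operator computation work; verifying that the indices align correctly across all shuffle terms is the delicate, though ultimately routine, step. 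Once the two sides are matched term by term against the shuffle sum, the formula follows and nothing beyond \eqref{SAYD1}, \eqref{SAYD2}, the (co)module-(co)algebra axioms, and the explicit $AW$ map is needed.
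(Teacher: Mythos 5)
Your proposal proves the wrong statement. The cup product in this proposition is \emph{not} the map $\cup=\Psi\circ AW$ of Section 3; it is the cup product of \cite{kr1}, recalled at the start of Section 4, defined by identifying $\phi$ and $\psi$ with closed $M$-traces $\int_\phi$, $\int_\psi$ on the universal DG algebras $\Om(A)$ and $\G(B)$ and setting $(\phi\cup\psi)(a^0\al b^0\ot\dots)=(\int_\phi\cup\int_\psi)\bigl(a^0\al b^0\, d(a^1\al b^1)\cdots d(a^{p+q}\al b^{p+q})\bigr)$. Correspondingly, the formula to be proved carries a sum over $(q,p)$-shuffles, and such a sum cannot come out of the Alexander--Whitney map: $AW$ contributes a \emph{single} front-face/back-face composite in each bidegree (your phrase ``AW shuffle'' conflates $AW$ with its quasi-inverse, the shuffle map), which is exactly why the $\Psi\circ AW$ computation in Section 3 yields a one-term Hochschild-level formula with no shuffle sum. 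Feeding $AW$ into $\Psi$ as you propose would reproduce that one-term formula, not the $\binom{p+q}{p}$-term formula asserted here; the two expressions differ already at the cochain level for $p=q=1$, and no Hopf-algebraic reshuffling of coaction legs converts one into the other. They agree only in cohomology, via Eilenberg--Zilber, which is neither what ``precisely given by'' means nor established in the paper.

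The missing idea is the paper's Lemma: one must expand the universal form $\theta^{p+q}=a^0\al b^0\, d(a^1\al b^1)\cdots d(a^{p+q}\al b^{p+q})$ inside the crossed product DG algebra $\Om(A)\al\G(B)$ and show, by induction on $p+q$ (splitting the last factor $d(a^{p+q}\al b^{p+q})$ into its $da^{p+q}\al b^{p+q}$ and $a^{p+q}\al db^{p+q}$ pieces, which converts the $(p-1,q)$ and $(p,q-1)$ components of the shorter form into the two families of shuffles with $\s(q)=p+q$ or $\s(p+q)=p+q$), that its component of bidegree $(q,p)$ equals $\sum_{\s\in Sh(q,p)}(-1)^\s\theta^{p+q}_\s$. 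Since $\int_\phi$ and $\int_\psi$ vanish except in degrees $p$ and $q$ respectively, only this component survives when the trace $\int_\phi\cup\int_\psi$ is applied, and evaluating the traces on each shuffle term --- this is where the equivariance, colinearity and SAYD identities you invoke actually enter --- produces the face-map expression $\p_{\bar\s(q)}\cdots\p_{\bar\s(1)}\phi\,(\p_{\bar\s(q+p)}\cdots\p_{\bar\s(q+1)}\psi(\cdots)\ot\cdots)$ of the statement. Note also that cyclicity of $\phi\cup\psi$ comes for free on this route, because $\int_\phi\cup\int_\psi$ is a closed trace on $\Om(A)\al\G(B)$; your route cannot deliver it, since $AW$ is not a cyclic map and the Section 3 formula is explicitly only valid at the level of Hochschild cochains.
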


Similarly by following   \cite{kr1},  one uses cotraces on DG
coalgebras to defined a cup product that generalizes characteristic
map in Hopf cyclic cohomology. Indeed let $C$ be a $\Hc$-module
coalgebra and $A$ be a $\Hc$-module algebra satisfying the
conditions \eqref{0}\dots \eqref{3}. Let also $\phi\in C^q_\Hc(A,M)$
and
 $x:= m\ot_\Hc c^0\ot\dots\ot c^p \in C^p_\Hc(C,M)$ be   Hopf cyclic
cocycles. Then  one gets  the following  cyclic cocycle
${x\cup\phi}$ on $A$ by
\begin{align*}
&{x\cup\phi}(a_0\ot a_1\odots a_{p+q})=\\
 &\sum_{\s\in Sh(q,p)}(-1)^\s
\p_{\bar\s(q)}\dots \p_{\bar\s(1)}\phi(\p_{\bar\s(q+p)}\dots
\p_{\bar\s(q+1)}x(a_0\odots a_{p+q})),
\end{align*}
where $(m\ot c^0\odots c^n)(a^0\odots c^n):=m\ot c^0(a_0)\odots
c^n(a_n)$.


\begin{thebibliography}{9}
\bibitem{cng}
Connes, A., {Noncommutative differential geometry}. Inst. Hautes
Etudes Sci.
 Publ. Math. No. {\bf 62} (1985), 257--360.

\bibitem{cm6} Connes, A. and Moscovici, H.,  Background independent geometry and
 Hopf cyclic cohomology, arXiv:math.QA/0505475.

 \bibitem{cm7}  Connes, A. and Moscovici, H., Transgressions of the Godbillon-Vey class and Rademacher functions,79--107, Aspects Math., E37, Vieweg, Wiesbaden,
 2006.


\bibitem{cm2} Connes, A. and Moscovici, H., {Hopf algebras, cyclic
cohomology and the transverse index theorem}, Commun. Math. Phys.
{\bf 198} (1998), 199-246.


\bibitem{cr}
M. Crainic, {Cyclic cohomology of Hopf algebras}. J. Pure Appl.
Algebra {\bf 166} (2002), no. 1-2, 29--66.


 \bibitem{gj} Getzler, E.  and Jones, J. D. S., {The cyclic homology of
 crossed product algebras}, J. reine angew. Math. {\bf 445} (1993), 163--174.

\bibitem{gor}{A. Gorokhovsky: Secondary characteristic classes and cyclic
cohomology of Hopf algebras, {\it Topology} {\bf 41(5)}, 993-1016
(2002)}
 \bibitem{hkrs1}
Hajac, P. M. , Khalkhali, M. , Rangipour, B. and Sommerh\"auser Y.,
{Stable anti-Yetter-Drinfeld modules.}  C. R. Math. Acad. Sci. Paris
{\bf 338}  (2004),  no. 8, 587--590.

\bibitem{hkrs2}
Hajac, P. M. , Khalkhali, M. , Rangipour, B. and Sommerh\"auser Y.,
{Hopf-cyclic homology and cohomology with coefficients.}  C. R.
Math. Acad. Sci. Paris  338  (2004), no. 9, 667--672.

\bibitem{k}
Kaygun, A., Bialgebra cyclic homology with coefficients. $K$-Theory
{\bf 34} (2005), no. 2, 151--194.

\bibitem{kr1}
Khalkhali, M. and Rangipour, B.,  Cup products in Hopf cyclic
cohomology. C. R. Math. Acad. Sci. Paris, 340(1):9-14, 2005.

\bibitem{kr}Khalkhali, M., Rangipour, B.,
  Invariant cyclic homology. $K$-Theory {\bf 28} (2003), no. 2, 183--205.



\bibitem{lo}
Loday, J.-L., Cyclic homology.  Fundamental Principles of
Mathematical Sciences, {\bf 301}. Springer-Verlag, Berlin, 1992.



\bibitem{mr} Moscovici, H., Rangipour, B.,  Cyclic cohomology of Hopf
algebras of transverse symmetries in codimension 1. Adv. Math. {\bf
210} (2007), no. 1, 323--374.
\bibitem{Ran} Rangipour, B.,
Cup products in Hopf cyclic cohomology via cyclic modules II, in
progress.

\end{thebibliography}
\end{document}